\documentclass[12pt]{amsart}
\usepackage{amsfonts}
\usepackage{amssymb}

\usepackage{amsmath}
\usepackage{epsf}
\usepackage{epsfig}
\usepackage{colordvi}
\usepackage{fancybox}

\usepackage{xcolor}

\newcommand{\R}{\mathbb R}

\newtheorem{thm}{Theorem}[section]

\newtheorem{lemma}[thm]{Lemma}
\newtheorem{prop}[thm]{Proposition}
\theoremstyle{definition}
\newtheorem{defn}[thm]{Definition}

\theoremstyle{remark}
\newtheorem{remark}{Remark}[section]

\newtheorem{example}{Example}[section]

\newcommand{\ds}{\displaystyle}

\begin{document}

\title[Basic asymptotic invariants for time-like surfaces in $\mathbb R^3_1$  ]
{Basic asymptotic invariants for time-like surfaces \\ with real asymptotic lines in $\mathbb R^3_1$}%

\thanks {{\it 2020 Mathematics Subject Classification}:   Primary 53A35; Secondary 53B30.  }
\author{Ognian Kassabov}%
\address{Institute of Mathematics and Informatics, Bulgarian Academy of Sciences, \\ Acad. G. Bonchev Str. bl. 8, 1113, Sofia, Bulgaria}
\email{okassabov@abv.bg}

\keywords{Time-like surface, Asymptotic parameters, Canonical parameters, Basic asymptotic invariants}%

\begin{abstract}
The geometrically defined wide class of time-like surfaces in $\mathbb R^3_1$ admitting real asymptotic lines is considered.
A fundamental theorem of Bonnet-type is obtained for these surfaces. It states that a surface in this class is determined
(up to a motion) by four invariant functions, satisfying some
natural PDEs. Then canonical parameters are defined for these surfaces and it is proved that such a 
surface  is determined (up to a motion) in canonical parameters with only two invariant functions (which in particular
can be the Gauss and the mean curvature) satisfying 
a partial differential equation equivalent to the Gauss equation.  
\end{abstract}
\maketitle




\section{Introduction}
\vspace{0.5cm}

A fundamental problem in  differential geometry is to characterize an object by
functions naturally connected with the object. A classical and well known example 
in this kind is the characterization of a curve in the space by its curvature 
and torsion as functions of the natural parameter.
Another example is the Bonnet's classical theorem, according to which a surface in 
$\mathbb R^3$ is determined by the coefficients of the first and the second fundamental
forms satisfying the equations of Gauss and Codazzi.

In these two examples we can see a principal difference in the investigations of curves and
of surfaces -- in the second case we need a relation between the function determining
the surface. This seems very natural according to the Gauss and Codazzi equations.
Another difference in these two examples is that in the first one the curvature and the torsion 
of a curve are invariants of the curve, while the coefficients of the fundamental forms of a surface are not.
The clarification of the role of invariants in the determination of surfaces begins in \cite{Bonnet},
where Bonnet studies the relationship between the surface and its first fundamental form and 
principal curvatures. For other works in this direction see e.g. \'E. Cartan \cite{Cartan}, S.-S. Chern  
\cite{Chern}, F. Rellich \cite{Rellich}, W. Scherrer \cite{Scherrer}.
  
Recently many works have investigated the problem of determining the surfaces of special types in the 3- or 4-dimensional Euclidean space
as well as in pseudo-Euclidean spaces by invariants. G. Ganchev and V. Mihova \cite{Ganchev-Mihova} proved that a Weingarten surface in 
$\mathbb R^3$ is determined up to a motion in special parameters 
by three functions (one of which is invariant and the two other determine the Weingarten nature of the surface)
which satisfy
a natural PDE. By the same arguments a similar result  holds for space-like Weingarten surfaces as well
as  for time-like Weingarten surfaces in the Lorentz space $\mathbb R^3_1$, if the Gauss curvature $K$ and the mean curvature $H$
satisfy $K-H^2<0$, see \cite{G-V-2013}. 
R. Tribucy and I. Guadalupe \cite{Tribucy-Guadalupe} proved that 
a minimal surface in $\mathbb R^4$ is determined up to a motion by the Gauss and the normal curvature satisfying a system of two
PDEs. Similar problems for minimal surfaces in $\mathbb R^4_1$ and $\mathbb R^4_2$ are considered respectively in 
\cite{Ganchev-Vel} and \cite{Ganchev-Krassi}. 

In the case of an arbitrary non-umbilical surface in $\mathbb R^3$, the present author proved  that
the surface admits special principal parameters and the surface is uniquely determined (up to a motion)
in these parameters by the Gauss and the mean curvature functions satisfying a PDE, see \cite{OK}.  The same question
is considered in  \cite{K-K-M} for surfaces with non-vanishing $K-H^2$ in $\mathbb R^3_1$. The arguments and the results
use isotropic (or null) parameters and the main result contains an integro-differential equation, that is not usual 
and not very convenient.  On the other hand, the considerations of space-like  surfaces and
of time-like surfaces in the $\mathbb R^3_1$ with $K-H^2<0$ are absolutely similar to the case of surfaces in $\mathbb R^3$.
So only the case of time-like surfaces satisfying $K-H^2>0$ (so, admitting real
asymptotic lines) is considerably different and deserves a special interest.

Exactly this case is the object of the present paper. It turns out that the asymptotic parameters are very
useful in our case. We prove  that the surface is uniquely determined in asymptotic parameters
by four invariant geometrically defined functions, satisfying a system of three PDSs. Then
we show that the surface admits special asymptotic parameters (we call them canonical asymptotic parameters).
In the particular case, when the surface is minimal our definition of canonical asymptotic parameters coincides with 
the definition of canonical parameters given in \cite{Ganchev-Krassi-2019}.
We obtain a fundamental theorem, stating that a time-like surface in $\mathbb R_1^3$ 
with $K-H^2>0$ is determined in canonical asymptotic parameters up to a motion in the space by 
its Gauss and mean curvature functions, satisfying a PDE , equivalent to the Gauss equation. 
When the Gauss curvature is constant this equation is studied in \cite{G-H-I}.

\vspace{0.5cm}
 



\section{Preliminaries}

Let $\mathbb R^3_1$ be the 3-dimensional pseudo-Euclidean space with scalar product given by
$$
	\langle x,y \rangle=x_1y_1+x_2y_2-x_3y_3  \ .
$$
A vector $x$ is called space-like, time-like or light-like if $\langle x,x \rangle$ is $>0$, 
$<0$ or $=0$, respectively. A surface  $S$ in $\mathbb R^3_1$ is said to be space-like or time-like 
if at any point of $S$ its normal vector is time-like, resp. space-like. Then the induced metric on $S$
is respectively Riemannian or Lorentzian. 

Suppose the time-like surface $S$ is defined by $	z=z(u,v) $ and denote the derivatives of $z(u,v)$ by
$	z_u $, $z_v$, $z_{uu}$, etc. The coefficients of the first fundamental form are given by 
$$
	E=z_u^2  \hspace{1.5cm}  F=\langle z_u,z_v \rangle   \hspace{1.5cm}  G=z_v^2  \ .
$$
We denote by $n$ the unit vector, collinear to the Lorentzian cross product $z_u\times z_v$, so that
the triple $\{ z_u,z_v,n \}$ is positively oriented. Then the coefficients of  the second fundamental form 
are given by
$$
	L=\langle n,z_{uu} \rangle  \hspace{1.5cm}   M=\langle n,z_{uv} \rangle \hspace{1.5cm}  N=\langle n,z_{vv} \rangle \ .
$$
The Gauss curvature $K$ and the mean curvature $H$ of $S$ are defined by
$$
	K=\frac{LN-M^2}{EG-F^2}    \hspace{2cm}  H=\frac{EN-2FM+GL}{2(EG-F^2)} \ .
$$

When $E=G=0$, the parameters $(u,v)$ are called isotropic or null parameters. For them the parametric 
lines are isotropic. In \cite{K-K-M} the geometry of a time-like
surface is considered using isotropic parameters. On the other hand, in the study of space-like surfaces or  time-like surfaces  with
$K-H^2<0$, the principal parameters are more natural and more useful. For these parameters the parametric lines are principal and
they are characterized by $F=M=0$. Then the theory of the surface can be developed 
analogously to that in the classical Riemannian geometry, see \cite{G-V-2013}.  In our study we shall use asymptotic parameters, i.e. these for which 
the parametric lines are asymptotic.  Analytically they are characterized by $L=N=0$. 
Two pairs of asymptotic parameters $(u, v)$ and $(\bar u,\bar v)$  in
a neighbourhood of a  point are related either by $u = u(\bar u)$, $v = v(\bar v)$, or
$u = u(\bar v)$, $v = v(\bar u)$.

It is easy to see that if the Gauss curvature $K$ is positive, the time-like surface admits real asymptotic parameters.
Suppose moreover that $K-H^2$ never vanishes. As we said, in the case $K-H^2<0$
the surface can be parametrized by (real) principal parameters and the theory is analogous 
to that in the Riemannian case. So suppose $K-H^2>0$, i.e. the principal curvatures are imaginary. Then
in asymptotic parameters 
$$
	K-H^2=-EG\frac{M^2}{(EG-F^2)^2} \ .
$$
Without loss of generality we assume  $E>0$, $G<0$. We put
$$
	x=\frac{z_u}{\sqrt E}  \hspace{1.5cm}   y=\frac{z_v}{\sqrt{-G}}   \hspace{1.5cm}  a=\langle x,y \rangle \ .
$$
Then $x^2=1$, $y^2=-1$. Denoting by $\nabla$ the covariant differentiation on $\mathbb R^3_1$ 
we obtain the following Frenet-type formulas:

\begin{equation} \label{eq:FrenetFormulas}
	\begin{array}{l}
		\ds\nabla_xx=-a\gamma_1x+\gamma_1y \\
		\ds\nabla_xy=\left(\frac{x(a)}{1+a^2}+\gamma_1\right)x+a\left(\frac{x(a)}{1+a^2}+\gamma_1\right)y  +\alpha n \\
		\ds\nabla_xn=-\frac{a\alpha}{1+a^2}x+\frac{\alpha}{1+a^2}y     \\
		\ds\nabla_yx=a\left(\frac{y(a)}{1+a^2}-\gamma_2\right)x-\left(\frac{y(a)}{1+a^2}-\gamma_2\right)y+\alpha n   \\
		\ds\nabla_yy=\gamma_2x+a\gamma_2y  \\		
		\ds\nabla_yn=-\frac{\alpha}{1+a^2}x-\frac{a\alpha}{1+a^2}y
	\end{array}
\end{equation}
where 
\begin{equation} \label{eq:Gamma'sFormulas}
	\begin{array}{l}
		\ds\gamma_1=-\frac1{1+a^2} \left( x(a)+a\frac{(\sqrt{-G})_u}{{\sqrt E\sqrt{-G}}}-\frac{(\sqrt E)_v}{\sqrt E\sqrt{-G}} \right) \\
		\ds\gamma_2=\frac1{1+a^2} \left( y(a)+a\frac{(\sqrt E)_v}{\sqrt E\sqrt{-G}}+\frac{(\sqrt{-G})_u}{\sqrt E\sqrt{-G}} \right) 
	\end{array}
\end{equation}
$$
	a=\frac{F} {\sqrt E\sqrt {-G}}           \hspace{1.5cm}    \alpha=\frac M {\sqrt E\sqrt {-G}}\ .
$$

Hence we obtain also 
\begin{equation} \label{eq:KandHbyaAndAlpha}
	K=\frac{\alpha^2}{1+a^2}    \hspace{1.5cm}    H=\frac{a\alpha}{1+a^2} \ .
\end{equation}

\begin{remark}
Note that the functions $a$, $\gamma_i$ and $\alpha$ are invariants of the surface. 
 But regardless of the notations
 $\gamma_i$ and $\alpha$ are not in general the geodesic curvatures  and torsion
of the asymptotic lines. More precisely
if $\overline\gamma_i$ are the geodesic curvatures of the asymptotic lines, then 
$$
	\overline\gamma_i=\gamma_i\sqrt{1+a^2} \ .
$$
Similarly, if $\overline\alpha$ is  the torsion of the asymptotic lines, then
$$
	\alpha=\overline\alpha\sqrt{1+a^2} \ .
$$
In particular, if $a=0$, i.e. if the surface is minimal, then $\overline\gamma_i=\gamma_i$
and $\overline\alpha=\alpha$.
\end{remark}

We shall call $\gamma_1$, $\gamma_2$, $a$ and $\alpha$  {\it the basic asymptothic invariants} of the surface.

Using (\ref{eq:FrenetFormulas}) we derive the main equations of the surface - the Gauss equation
\begin{equation} \label{eq:GaussEquation-1}
	\begin{array}{l}
		\ds x(\gamma_2)-y(\gamma_1)-2a\gamma_1\gamma_2 -\gamma_1^2+\gamma_2^2
			-\gamma_1 \frac{x(a)}{1+a^2}	-\gamma_2\frac{y(a)}{1+a^2} \\
			\ds -\frac{a_{uv}}{(1+a^2)\sqrt E\sqrt {-G}}+\frac{ax(a)y(a)}{(1+a^2)^2}
		+\frac{\alpha^2}{1+a^2}=0    \vspace{0.3cm}\\
	\end{array}
\end{equation}
and the equations of Codazzi:
\begin{equation} \label{eq:EquationsOfCodazzi}
	\begin{array}{l}
		\ds x(\alpha)= a\alpha \frac{x(a)}{1+a^2}  +2 \alpha\left(\frac{y(a)}{1+a^2}-\gamma_2 \right)  \ , \\
		\ds y(\alpha)= a\alpha \frac{y(a)}{1+a^2}-2\alpha \left(\frac{x(a)}{1+a^2}+\gamma_1\right)   \ .
	\end{array}
\end{equation}
Note that the expression $\ds \frac{\alpha^2}{1+a^2}$ in the Gauss equation is exactly the Gauss curvature $K$.

\vspace{0.5cm}



\setcounter{equation}{0}

\section{Determining a time-like surface by invariants}

Using (\ref{eq:Gamma'sFormulas}), from the Codazzi equations (\ref{eq:EquationsOfCodazzi}) we find

\begin{equation}\label{eq:System}
	\begin{array}{rl}
	 \left(\sqrt E\right)_v \hspace{-0.2cm}    &  \ds =f_v\sqrt E +af_u\sqrt{-G}  \\
	 \left(\sqrt{-G}\right)_u  \hspace{-0.3cm} &  \ds =-af_v\sqrt E +f_u\sqrt{-G}
	\end{array}
\end{equation} 
where
$$
	f=\frac12 \left( \log\sqrt{1+a^2}-\log|\alpha| \right)  \,.
$$

\begin{remark} \label{remark:K-aAndalppha}
Since $K=\frac{\alpha^2}{1+a^2}$ we have also $f=-\log\root4\of  K$.
\end{remark}

Now the formulas for $\gamma_1$ and $\gamma_2$ take the form
\begin{equation} \label{eq:gamma1-gamma2}
	\begin{array}{l} 
		\ds \gamma_1	=-\frac{a_u}{(1+a^2)\sqrt E} +\frac{f_v}{\sqrt{-G}} \,, \\
		\ds \gamma_2=\frac{a_v}{(1+a^2)\sqrt{-G}}   + \frac{f_u}{\sqrt E}  \,.  
	\end{array}  
\end{equation}
From this system for $\sqrt E$ and $\sqrt{-G}$ we obtain

\begin{equation} \label{eq:firstfundformByInvariants}
	\begin{array}{l}
		\ds \sqrt E=\frac{a_ua_v+(1+a^2)^2f_uf_v}{(1+a^2)(-a_v\gamma_1+(1+a^2)f_v\gamma_2)} \ ,  \vspace{0,2cm}\\
		\ds \sqrt{-G}=\frac{a_ua_v+(1+a^2)^2f_uf_v}{(1+a^2)(a_u\gamma_2+(1+a^2)f_u\gamma_1)}\ .
	\end{array}  
\end{equation}
In particular (\ref{eq:firstfundformByInvariants}) implies
that the functions on the right are positive.

The Frenet-type formulas (\ref{eq:FrenetFormulas}) can be written as
$$
	\begin{array}{l}
		\ds x_u=\sqrt E\gamma_1(-ax+y)  \\
		\ds y_u=\sqrt E\left[\left(\frac{x(a)}{1+a^2}+\gamma_1\right)(x+ay)  +\alpha n \right]\\
		\ds n_u=\sqrt E\frac{\alpha}{1+a^2}(-ax+y)    \\
		\ds x_v=\sqrt{-G}\left[\left(\frac{y(a)}{1+a^2}-\gamma_2\right)(ax-y)+\alpha n \right]  \\
		\ds y_v=\sqrt{-G}\gamma_2(x+ay  ) \\		
		\ds n_v=-\sqrt{-G}\frac{\alpha}{1+a^2}(x+ay) \ .
	\end{array}
$$

Using this we shall prove the following theorem

\begin{thm} \label{eq:Bonnet-typeTheorem}

Assume the functions $\gamma_1(u,v)$, $\gamma_2(u,v)$, $a(u,v)$, $\alpha(u,v)$ are smooth in a neighbourhood 
$ D $ of a point $(u_0,v_0) \in \mathbb R^2$. Suppose the functions
$ f=\frac12 \left( \log\sqrt{1+a^2}-\log|\alpha| \right) $,
$$
	\Phi= \frac{a_ua_v+(1+a^2)^2f_uf_v}{(1+a^2)(-a_v\gamma_1+(1+a^2)f_v\gamma_2)}   \hspace{1.5cm}
		 \Psi=\frac{a_ua_v+(1+a^2)^2f_uf_v}{(1+a^2)(a_u\gamma_2+(1+a^2)f_u\gamma_1)}
$$
are well defined and $\Phi>0$, $\Psi>0$ in $D$. If the following conditions are satisfied:
\begin{equation} \label{eq:GaussEquation-2}
	\begin{array}{l}
		\ds \frac{(\gamma_2)_u}{\Phi}-\frac{(\gamma_1)_v}{\Psi}-2a\gamma_1\gamma_2 -\gamma_1^2+\gamma_2^2
			-\gamma_1 \frac{a_u}{(1+a^2)\Phi}	-\gamma_2\frac{a_v}{(1+a^2)\Psi} \\
			\ds -\frac{a_{uv}}{(1+a^2)\Phi\Psi}+\frac{aa_ua_v}{(1+a^2)^2\Phi\Psi}
		+\frac{\alpha^2}{1+a^2}=0    \vspace{0.3cm}    \ , \\
	\end{array}
\end{equation}
\begin{equation} \label{eq:EquationsOfCodazzi-2}
	\begin{array}{l}
		\ds (\log\Phi)_v+af_u\frac{a_v\gamma_1-(1+a^2)f_v\gamma_2}{a_u\gamma_2+(1+a^2)f_u\gamma_1}=f_v \ , \\
		\ds (\log\Psi)_u+af_v\frac{a_u\gamma_2+(1+a^2)f_u\gamma_1}{-a_v\gamma_1+(1+a^2)f_v\gamma_2}=f_u \ ,
	\end{array}
\end{equation}  
then there exists a unique (up to a motion) surface in $\mathbb R^3_1$ with basic
asymptotic invariants the given functions $\gamma_1$, $\gamma_2$, $a$, $\alpha$.
Moreover $(u,v)$ are asymptotic parameters.
\end{thm}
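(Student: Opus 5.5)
Set $\sqrt E=\Phi$, $\sqrt{-G}=\Psi$, $F=a\Phi\Psi$, $M=\alpha\Phi\Psi$, and reduce the statement to the classical Bonnet-type existence and uniqueness theorem for linear Pfaffian systems. Consider the frame $(x,y,n)$ as unknown and write the linear system
$$
\begin{array}{l}
x_u=\Phi\gamma_1(-ax+y),\quad y_u=\Phi\Big[\Big(\frac{a_u}{(1+a^2)\Phi}+\gamma_1\Big)(x+ay)+\alpha n\Big],\quad n_u=\Phi\frac{\alpha}{1+a^2}(-ax+y),\\
x_v=\Psi\Big[\Big(\frac{a_v}{(1+a^2)\Psi}-\gamma_2\Big)(ax-y)+\alpha n\Big],\quad y_v=\Psi\gamma_2(x+ay),\quad n_v=-\Psi\frac{\alpha}{1+a^2}(x+ay),
\end{array}
$$
that is, the Frenet-type formulas above with $x(a)=a_u/\Phi$ and $y(a)=a_v/\Psi$. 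By Frobenius, this system has a unique solution with prescribed initial frame at $(u_0,v_0)$ iff $(x_u)_v=(x_v)_u$ and likewise for $y,n$, identically in $x,y,n$. Expanding the mixed derivatives in the basis $x,y,n$ gives a finite list of scalar identities. They should reduce to exactly three independent relations: the Gauss equation (\ref{eq:GaussEquation-2}) and the two Codazzi equations (\ref{eq:EquationsOfCodazzi}) with $x(\cdot)=\partial_u(\cdot)/\Phi$ and $y(\cdot)=\partial_v(\cdot)/\Psi$. The remaining components should be trivial or consequences of these, just as in the derivation of (\ref{eq:GaussEquation-1}) and (\ref{eq:EquationsOfCodazzi}).

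The key step is to show that the hypotheses give the Codazzi equations. I will reverse the derivation in Section~3. By its definition, the pair $(\Phi,\Psi)$ solves the linear algebraic system (\ref{eq:gamma1-gamma2}) with $\sqrt E,\sqrt{-G}$ replaced by $\Phi,\Psi$. To check this, solve
$$\gamma_1\Phi\Psi=-\frac{a_u\Psi}{1+a^2}+f_v\Phi,\qquad \gamma_2\Phi\Psi=\frac{a_v\Phi}{1+a^2}+f_u\Psi,$$
for the ratio $\Phi/\Psi$ and then for $\Phi$ and $\Psi$; this returns the given formulas. Using the ratio $\Phi/\Psi=(a_u\gamma_2+(1+a^2)f_u\gamma_1)/(-a_v\gamma_1+(1+a^2)f_v\gamma_2)$, the conditions (\ref{eq:EquationsOfCodazzi-2}) become exactly (\ref{eq:System}):
$$\Phi_v=f_v\Phi+af_u\Psi,\qquad \Psi_u=-af_v\Phi+f_u\Psi.$$
Section~3 obtained (\ref{eq:System}) from (\ref{eq:EquationsOfCodazzi}) via (\ref{eq:Gamma'sFormulas}). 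Conversely, substituting (\ref{eq:System}) into (\ref{eq:Gamma'sFormulas}) recovers (\ref{eq:gamma1-gamma2}), and these steps are reversible. Together with $f=\frac12\log(\sqrt{1+a^2}/|\alpha|)$, this yields the Codazzi equations for $\alpha$. The Gauss compatibility is (\ref{eq:GaussEquation-2}) verbatim. I expect the main obstacle to be this bookkeeping: checking that every component of the compatibility conditions is one of the three equations, and confirming the equivalence (\ref{eq:System})$\Leftrightarrow$(\ref{eq:EquationsOfCodazzi}) given (\ref{eq:gamma1-gamma2}).

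Next, choose initial values $x_0,y_0,n_0$ with $x_0^2=1$, $y_0^2=-1$, $\langle x_0,y_0\rangle=a(u_0,v_0)$, and $n_0$ a unit space-like vector orthogonal to both, positively oriented. The system preserves the Gram matrix: the functions $\langle x,x\rangle$, $\langle x,y\rangle$, $\ldots$ satisfy a linear homogeneous system for which the constants $(1,-1,1,a,0,0)$ are a solution. Hence, by uniqueness, the relations $x^2=1$, $y^2=-1$, $\langle x,y\rangle=a$, $n\perp x,y$, $n^2=1$ persist. Then the form $\Phi x\,du+\Psi y\,dv$ is closed, since $(\Phi x)_v=(\Psi y)_u$ is again equivalent to (\ref{eq:System}) (the $n$-components match via $\alpha$). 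So there is a surface $z$ with $z_u=\Phi x$ and $z_v=\Psi y$.

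Finally, read off the invariants. We have $E=\Phi^2$, $G=-\Psi^2$, $F=a\Phi\Psi$, and $L=\langle n,z_{uu}\rangle=0$, $N=0$, so $(u,v)$ are asymptotic. Also $M=\alpha\Phi\Psi$, so the invariant $\alpha$ is correct. The surface is time-like since $EG-F^2=-\Phi^2\Psi^2(1+a^2)<0$. Formulas (\ref{eq:Gamma'sFormulas}) then return $\gamma_1,\gamma_2$. For uniqueness, two such surfaces have frames solving the same system; after a motion carrying one initial frame to the other, the frames coincide, and so do the surfaces.
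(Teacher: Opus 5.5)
Your proposal is correct and is essentially the paper's proof: the same frame system $\xi_u=U\xi$, $\xi_v=V\xi$ with compatibility checked from the hypotheses, persistence of the Gram relations from a suitable initial frame, then integration of $z_u=\Phi x$, $z_v=\Psi y$ and reading off $L=N=0$, $M=\alpha\Phi\Psi$. Your added bookkeeping goes beyond the paper, which only says ``using the condition of the theorem'': you show that $\Phi,\Psi$ are the Cramer solution of \eqref{eq:gamma1-gamma2}, that \eqref{eq:EquationsOfCodazzi-2} is exactly \eqref{eq:System} via the ratio $\Phi/\Psi$, and that \eqref{eq:System} both closes $\Phi x\,du+\Psi y\,dv$ and turns the raw compatibility components (which contain $\Phi_v$, $\Psi_u$) into \eqref{eq:GaussEquation-2} and the Codazzi equations.
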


\begin{proof}

We shall search for unit vector fields $x(u,v)$, $y(u,v)$, $n(u,v)$, subjects to the system
\begin{equation} \label{eq:MainEquationsInTheBonetTheorem}
	\begin{array}{l}
		\begin{array}{l}
		\ds x_u=\gamma_1\Phi(-ax+y)  \\
		\ds y_u=\left(\frac{a_u}{1+a^2}+\gamma_1\Phi\right)(x+ay)  +\alpha\Phi n \\
		\ds n_u=\frac{\alpha\Phi}{1+a^2}(-ax+y)    \\
		\ds x_v=\left(\frac{a_v}{1+a^2}-\gamma_2\Psi\right)(ax-y)+\alpha\Psi n   \\
		\ds y_v=\gamma_2\Psi(x+ay ) \\		
		\ds n_v=-\frac{\alpha\Psi}{1+a^2}(x+ay) 
	\end{array}
	\end{array}
\end{equation}
or briefly
$$
	\xi_u=U\xi \hspace{2cm} \xi_v=V\xi  \,,
$$
where
$\xi=(x,y,n)^t$,
$$
	U=\Phi\left(
			\begin{array}{ccc}
				\ds -a\gamma_1    & \gamma_1   &   0  \\
				\ds \frac{a_u}{(1+a^2)\Phi}+\gamma_1   & \ds \frac{aa_u}{(1+a^2)\Phi}+a\gamma_1  & \alpha  \\
				\ds -\frac{a\alpha}{1+a^2}   & \ds \frac{\alpha}{1+a^2}   &   0  \\
				\end{array}
		\right)      \,,
$$
$$
	V=\Psi\left(
			\begin{array}{ccc}
				\ds \frac{aa_v}{(1+a^2)\Psi}-a\gamma_2  & \ds \frac{-a_v}{(1+a^2)\Psi}+\gamma_2  & \alpha   \\
				\ds \gamma_2 & a\gamma_2 &  0  \\		
				\ds -\frac{\alpha}{1+a^2}  &  \ds -\frac{a\alpha}{1+a^2} &  0
			\end{array}
		\right) \,.
$$
The integrability condition of this system is
$$
	U_v-V_u=VU-UV \ .
$$
Using the condition of the theorem we can conclude that this condition is fulfilled.
Consequently there exist unique vector fields $x$, $y$, $n$, defined in a
subset $D_0$ of $D$ such that
$$
	x(u_0,v_0)=x_0 \qquad y(u_0,v_0)=y_0  \qquad n(u_0,v_0)=n_0 \ ,
$$ 
where $(u_0,v_0)$ is a point in $D$ and $\{x_0,y_0,n_0\}$ is a positively oriented orthonormal basis in a point $z_0$ of $\mathbb R^3_1$, such that 
$$
	x_0^2=1   \qquad   x_0y_0=a(u_0,v_0) \qquad y_0^2=-1   \qquad   n_0^2=1   \qquad  x_0n_0=0  \qquad  y_0n_0=0   \ .
$$
A standard procedure leads to the conclusion that 
$$
	x^2=1   \qquad   xy=a \qquad y^2=-1  \qquad  xn=0  \qquad  yn=0    \qquad   n^2=1  
$$
because this is true at $(u_0,v_0)$. 
The triple $\{x,y,n\}$ is positively oriented, because this is so at $(u_0,v_0)$.

Now we look for a vector valued function $z(u,v)$, such that
$$
	z_u=\Phi x  \qquad\qquad z_v=\Psi y \ .
$$
The integrability condition 
$$
	(\Phi x)_v=(\Psi y)_u 
$$
is fulfilled and hence there exists a unique function $z(u,v)$, 
defined in a sub-domain of $D_0$, such that $z(u_0,v_0)=z_0$.

On the other hand, as  $x$, $y$, $n$ satisfy (\ref{eq:MainEquationsInTheBonetTheorem}), 
then  $(u,v)$ are asymptotic parameters and the given functions 
$\gamma_1$, $\gamma_2$, $a$, $\alpha$ are the basic asymptotic invariants
of the surface. 

\end{proof}

\vspace{0.5cm}



\setcounter{equation}{0}

\section{Canonical parameters for surfaces with positive Gauss curvature}

Formulas (\ref{eq:System}) imply easily that for any point $(u_0,v_0)$  the functions
\begin{equation} \label{eq:phi-psi}
	\begin{array}{l} 
		\vspace{2mm}
		\ \ds\sqrt Ee^{-\ds\hspace{-1mm}\int_{v_0}^v\left(f_v+af_u\frac{\sqrt{-G}}{\sqrt E}\right)dv
		                -\hspace{-1mm}\int_{u_0}^u\left(-af_v\frac{\sqrt E}{\sqrt{-G}}+f_u\right)\hspace{-1mm}(u,v_0)du-\hspace{-1mm}f(u_0,v_0)} \ ,  \\
		\vspace{2mm}
		\ds\sqrt {-G}e^{-\ds\hspace{-1mm}\hspace{-1mm}\int_{u_0}^u\left(-af_v\frac{\sqrt E}{\sqrt{-G}}+f_u\right)du
		                -\int_{v_0}^v\left(f_v+af_u\frac{\sqrt{-G}}{\sqrt E}\right)\hspace{-1mm}(u_0,v)dv-\hspace{-1mm}f(u_0,v_0)}  \ ,          
	\end{array}   
\end{equation}
do not depend on $v$ and $u$, respectively. 
To work with the most natural simplification of these functions, we give the following definition: 

\begin{defn} \label{D:def-can}
 We say that the asymptotic parameters $(u,v)$ of a surface of positive Gauss curvature in $\R^3_1$ are {\it canonical asymptotic parameters}
if the functions $\varphi(u)$ and $\psi(v)$ defined by \eqref{eq:phi-psi} are equal to 1.  
\end{defn}

\begin{def} \label{D:def-can}
 We say that the asymptotic parameters $(u,v)$ of a surface of positive Gauss curvature in $\R^3_1$ are {\it canonical asymptotic parameters}
if the functions $\varphi(u)$ and $\psi(v)$ defined by \eqref{eq:phi-psi} are equal to 1.  
\end{def}

In other words the  asymptotic parameters are canonical if
$$
	\begin{array}{l} 
		\vspace{2mm}
		\ds \sqrt E=e^{\ds \int_{v_0}^v\left(f_v+af_u\frac{\sqrt{-G}}{\sqrt E}\right)dv
		                +\hspace{-1mm}\int_{u_0}^u\left(-af_v\frac{\sqrt E}{\sqrt{-G}}+f_u\right)\hspace{-1mm}(u,v_0)du +\hspace{-1mm}f(u_0,v_0)} \ ,  \\
		\vspace{2mm}
		\ds \sqrt{-G}=e^{\ds \int_{u_0}^u\left(-af_v\frac{\sqrt E}{\sqrt{-G}}+f_u\right)du
		                +\hspace{-1mm}\int_{v_0}^v\left(f_v+af_u\frac{\sqrt{-G}}{\sqrt E}\right)\hspace{-1mm}(u_0,v)dv  +\hspace{-1mm}f(u_0,v_0)} \ .
	\end{array}   
$$

\begin{remark}In general the definition depends on the point $(u_0,v_0)$, so it is more exact
to speek about {\it canonical asymptotic parameres about a fixed point}.
\end{remark}

\begin{prop} \label{eq:ExistenceCanonicalParameters}
Let $S$ be a surface with positive Gauss curvature in $\mathbb R^3_1$. Then locally $S$ admits 
canonical asymptotic parameters.
\end{prop}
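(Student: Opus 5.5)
We start from arbitrary asymptotic parameters and rescale each of them separately, exactly as one normalizes a parameter along a curve. Since $S$ is time-like with $K-H^2>0$, it has real asymptotic lines. So near a point $p\in S$ we may choose asymptotic parameters $(u,v)$ with $p=z(u_0,v_0)$, $E>0$, $G<0$. Relation (\ref{eq:System}) then holds, and by the remark preceding Definition \ref{D:def-can} the two expressions in (\ref{eq:phi-psi}) are functions $\varphi(u)$ and $\psi(v)$ only. Both are positive, being products of $\sqrt E$ or $\sqrt{-G}$ with an exponential.

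We introduce new parameters
$$
\bar u=\int_{u_0}^u\varphi(t)\,dt ,\qquad \bar v=\int_{v_0}^v\psi(t)\,dt .
$$
Since $\varphi,\psi>0$, this is a local diffeomorphism of the form $u=u(\bar u)$, $v=v(\bar v)$. Hence $(\bar u,\bar v)$ are again asymptotic parameters, with the same orientation and with $(u_0,v_0)\mapsto(0,0)$. In the new parameters we have
$$
\sqrt{\bar E}=\frac{\sqrt E}{\varphi},\qquad \sqrt{-\bar G}=\frac{\sqrt{-G}}{\psi},\qquad f_u=\varphi f_{\bar u},\qquad f_v=\psi f_{\bar v},
$$
with $du=d\bar u/\varphi$ and $dv=d\bar v/\psi$. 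The functions $a$ and $f$ are invariants, as noted in Section 2 and in Remark \ref{remark:K-aAndalppha}, since $f=-\log\root4\of K$.

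The key step is to check that the exponents in (\ref{eq:phi-psi}) are unchanged by this reparametrization. Consider first the $v$-integral:
$$
\left(f_v+af_u\frac{\sqrt{-G}}{\sqrt E}\right)dv
=\left(\psi f_{\bar v}+a\varphi f_{\bar u}\frac{\psi\sqrt{-\bar G}}{\varphi\sqrt{\bar E}}\right)\frac{d\bar v}{\psi}
=\left(f_{\bar v}+af_{\bar u}\frac{\sqrt{-\bar G}}{\sqrt{\bar E}}\right)d\bar v .
$$
The $u$-integrand $\left(-af_v\frac{\sqrt E}{\sqrt{-G}}+f_u\right)du$ transforms in the same way. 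The restrictions to $v=v_0$ and $u=u_0$ correspond to $\bar v=0$ and $\bar u=0$, so the integration paths match. The constant $f(u_0,v_0)$ is the value of a scalar invariant at the same point. Therefore the exponential factors in (\ref{eq:phi-psi}) are identical in both parametrizations.

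It follows that
$$
\bar\varphi(\bar u)=\frac{\sqrt E}{\varphi(u)}\cdot\frac{\varphi(u)}{\sqrt E}=1,
$$
and likewise $\bar\psi(\bar v)=1$. So $(\bar u,\bar v)$ are canonical asymptotic parameters about the point $p$.

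The only point requiring care is this invariance computation, together with keeping track of the base point $(u_0,v_0)$. The remaining steps are routine: existence of asymptotic parameters, and positivity of $\varphi,\psi$, which makes the change of variables admissible.
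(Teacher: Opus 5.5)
Your proposal is correct and follows the paper's own proof. You reparametrize by $\bar u=\int\varphi$, $\bar v=\int\psi$, check that the integrands in the exponents pick up exactly the factors $d\bar u/du$, $d\bar v/dv$, conclude that the exponential factors are unchanged, and obtain $\bar\varphi=\bar\psi=1$. The only differences are cosmetic: you write out the ``straightforward verification'' that the paper omits, and you move the base point to $(0,0)$ instead of keeping $\bar u_0=u_0$, $\bar v_0=v_0$, which is harmless because canonicity is taken about the same point $p$ of $S$.
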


\begin{proof}

Taking arbitrary  asymptotic parameters $(u,v)$ we define new parameters $\bar u$, $\bar v$ by 
$$
	\bar u=\ds \int_{u_0}^u\varphi(u) +u_0 \ , \hspace{1.5cm}
		\bar v=\ds \int_{v_0}^v\psi(v) +v_0 \ .
$$ 
Then  $\bar u=\bar u(u)$ and $\bar v=\bar v(v)$, so the new parameters 
$\bar u$, $\bar v$ are also asymptotic.  On the other hand
$$
	\bar u_u=\sqrt Ee^{-\ds\int_{v_0}^v\left(f_v+af_u\frac{\sqrt {-G}}{\sqrt E}\right)dv
		                -\int_{u_0}^u\left(-af_v\frac{\sqrt {E}}{\sqrt{-G}}+f_u\right)(u,v_0)du-f(u_0,v_0)} \ ,
$$
$$
	\bar v_v=\sqrt {-G}e^{-\ds\int_{u_0}^u\left(-af_v\frac{\sqrt E}{\sqrt G}+f_u\right)du
		                -\int_{v_0}^v\left(f_v+af_u\frac{\sqrt {-G}}{\sqrt E}\right)(u_0,v)dv-f(u_0,v_0)} \ 
$$
imply
$$
	\overline E =e^{\ds2\left( \int_{v_0}^v\left(f_v+af_u\frac{\sqrt{-G}}{\sqrt E}\right)dv
		                +\int_{u_0}^u\left(-af_v\frac{\sqrt {E}}{\sqrt {-G}}+f_u\right)(u,v_0)du+f(u_0,v_0) \right)}  \ .
$$  
Moreover a straightforward verification gives
$$
	\ds \left(f_v+af_u\frac{\sqrt {-G}}{\sqrt {E}}\right)(u,v)
				=\left(\bar f_{\bar v}+\bar a\bar f_{\bar u}\frac{\sqrt{-\overline G}}{\sqrt {\overline E}}\right)(\bar u(u),\bar v(v))\frac{d\bar v}{dv} \ ,
$$
$$
	\ds \left(-af_v\frac{\sqrt {E}}{\sqrt {-G}}+f_u\right)(u,v)
				=\left(-\bar a\bar f_{\bar v}\frac{\sqrt {\overline E}}{\sqrt { {-\overline G}}}+\bar f_{\bar u}\right)(\bar u(u),\bar v(v))\frac{d\bar u}{du} \ .
$$
Denote $\bar u_0=\bar u(u_0)=u_0$, $\bar v_0=\bar v(v_0)=v_0$. Hence we derive
$$
	\overline E=e^{\ds 2\left( \int_{\bar v_0}^{\bar v} \hspace{-1mm}\left(\bar f_{\bar v}
																+\bar a\bar f_{\bar u}\frac{\sqrt {{-\overline G}}}{\sqrt {\overline E}}\right)(\bar u,\bar v)d\bar v
		                +\hspace{-1mm}\int_{\bar u_0}^{\bar u}\hspace{-1mm}\left(\hspace{-1mm}-\bar a\bar f_{\bar v}\frac{\sqrt {\overline E}}{\sqrt {{-\overline G}}}
										+ \bar f_{\bar u}\right)(\bar u,\bar v_0)d\bar u  +f(\bar u_0,\bar v_0) \right)} 
$$
i.e. $\bar\varphi (\bar u)=1$.
Analogously,  $\bar\psi(\bar v)=1$. So, the parameters $(\bar u,\bar v)$ are canonical.

\end{proof}

The following assertion gives the connection between different pairs of canonical
asymptotic parameters.

\begin{lemma} If $(u,v)$ and $(\bar u,\bar v)$ are canonical asymptotic parameters
in a neighbourhood of a point $p$, then
$$
	\left\{\begin{array}{l}
	 \bar u=\pm u +u_1 \\
   \bar v=\pm v+v_1
	\end{array}\right.
\qquad
or
\qquad
	\left\{\begin{array}{l}
		\bar u=\pm v+v_1 \\
		\bar v=\pm u+u_1 
	\end{array}\right.	
$$
for some constants $u_1$, $v_1$.
\end{lemma}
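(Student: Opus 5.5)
Two pairs of asymptotic parameters are related either by $\bar u=\bar u(u)$, $\bar v=\bar v(v)$ or by $\bar u=\bar u(v)$, $\bar v=\bar v(u)$. We treat the first case; the second reduces to it by swapping the roles of $u$ and $v$, which interchanges $E$ and $G$ up to the sign convention $E>0$, $G<0$. The plan is to show that $\bar u'(u)^2=1$ and $\bar v'(v)^2=1$. Since the derivatives are continuous and never vanish, this gives $\bar u=\pm u+u_1$ and $\bar v=\pm v+v_1$.

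\textbf{Step 1: transformation rules.} Under $\bar u=\bar u(u)$, $\bar v=\bar v(v)$ we have $\sqrt E=\sqrt{\overline E}\,|\bar u'|$ and $\sqrt{-G}=\sqrt{-\overline G}\,|\bar v'|$, while $a$ and $f$ are invariants, so $f(u,v)=\bar f(\bar u(u),\bar v(v))$. We also use the two transformation identities for the integrands, already verified in the proof of Proposition~\ref{eq:ExistenceCanonicalParameters}:
$$\left(f_v+af_u\tfrac{\sqrt{-G}}{\sqrt E}\right)=\left(\bar f_{\bar v}+\bar a\bar f_{\bar u}\tfrac{\sqrt{-\overline G}}{\sqrt{\overline E}}\right)\bar v',$$
together with its companion in which $\bar u'$ appears. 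Some care with signs is needed when $\bar u'<0$ or $\bar v'<0$. The ratio $\sqrt{-G}/\sqrt E$ carries $|\bar v'|/|\bar u'|$, while $f_u=\bar f_{\bar u}\bar u'$. If a sign mismatch occurs, we first reduce to orientation-preserving reparametrizations by replacing $u\mapsto -u$, and then reinsert the signs at the end.

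\textbf{Step 2: compare the exponents.} Write $\bar u_0=\bar u(u_0)$ and $\bar v_0=\bar v(v_0)$; these need not equal $u_0$ and $v_0$. Substituting the identities and changing variables in the integrals shows that the exponent defining $\varphi$ in the old parameters equals the corresponding exponent in the barred parameters based at $(\bar u_0,\bar v_0)$. Canonicity in both systems then gives
$$\sqrt{\overline E}\,|\bar u'(u)|=\sqrt E=e^{(\text{exponent})}=\sqrt{\overline E},$$
hence $|\bar u'|=1$. In the same way, $|\bar v'|=1$ follows from $\psi$.

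\textbf{Main obstacle: dependence on the base point.} By the Remark after Definition~\ref{D:def-can}, canonicity is defined relative to a base point, and $(\bar u_0,\bar v_0)$ is not a priori the base point used for the barred system. We therefore interpret "canonical about the point $p$", with $p$ corresponding to the base point in both systems. The additive constants $f(u_0,v_0)=\bar f(\bar u_0,\bar v_0)$ then agree.

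If instead the base points differ, we use the fact that changing the base point changes $\varphi$ by the factor $e^{c}\cdot e^{-\int_{v_0}^{v_0'}(\ldots)(u,v)\,dv}$. By the first equation of (\ref{eq:System}), this factor is a function of $u$ alone. We would then need to check that it is in fact constant, because the integrals combine into a path-independent potential. This is the step we would verify most carefully. We would do so by showing that the exponent is the potential $\Lambda$ of the closed form $(-af_v\sqrt E/\sqrt{-G}+f_u)\,du+(f_v+af_u\sqrt{-G}/\sqrt E)\,dv$, whose closedness follows from (\ref{eq:System}). Canonicity then just says $\sqrt E=\sqrt{-G}=e^{\Lambda}$ with $\Lambda(u_0,v_0)=f(u_0,v_0)$. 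Once this is established, the comparison $|\bar u'|=|\bar v'|=1$ follows as above.
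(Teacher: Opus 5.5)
The paper states this lemma without proof. Your main line is correct and is the argument implicit in the proof of Proposition~\ref{eq:ExistenceCanonicalParameters}. You take the same geometric base point $p$ in both systems. The integrand identities, together with the substitutions $s\mapsto\bar u(s)$ and $t\mapsto\bar v(t)$, show that the exponents coincide. Then $\sqrt E=\sqrt{\overline E}\,|\bar u'|$ and canonicity in both systems give $|\bar u'|=1$, and likewise $|\bar v'|=1$.

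Two minor points:
\begin{itemize}
\item $a$ is not invariant; it satisfies $a=\operatorname{sgn}(\bar u'\bar v')\,\bar a$. This sign is exactly what makes both integrand identities hold for every sign of $\bar u',\bar v'$, so no reduction to orientation-preserving changes is needed.
\item The swapped alternative cannot occur. $K-H^2>0$ forces $EG<0$, and canonicity uses $\sqrt E$, $\sqrt{-G}$, so $E>0$, $G<0$ is imposed in both systems. Hence the spacelike asymptotic direction is both the $u$- and the $\bar u$-direction.
\end{itemize}

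Your last paragraph, however, is wrong and should be deleted rather than verified. The form $P\,du+Q\,dv$, with $P=-af_v\sqrt E/\sqrt{-G}+f_u$ and $Q=f_v+af_u\sqrt{-G}/\sqrt E$, is not closed in general. By \eqref{eq:System}, $P=(\log\sqrt{-G})_u$ and $Q=(\log\sqrt E)_v$ are derivatives of different functions. So $P_v-Q_u=\partial_u\partial_v\log\rho$ with $\rho=\sqrt{-G}/\sqrt E$. This vanishes identically only for special surfaces, e.g.\ minimal ones or those with $K$ constant, where $\sqrt E=e^fA(u)$ and $\sqrt{-G}=e^fB(v)$. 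Carrying out the integrals gives
\[
\varphi(u)=\frac{\sqrt{E(u,v_0)}\,\sqrt{-G(u_0,v_0)}}{\sqrt{-G(u,v_0)}}\,e^{-f(u_0,v_0)},\qquad
\psi(v)=\frac{\sqrt{-G(u_0,v)}\,\sqrt{E(u_0,v_0)}}{\sqrt{E(u_0,v)}}\,e^{-f(u_0,v_0)} .
\]
So canonicity about $(u_0,v_0)$ says exactly that $E=-G$ along the two asymptotic lines through the base point and $E=K^{-1/2}$ at that point. It does not say $\sqrt E=\sqrt{-G}$ everywhere.

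In such parameters, at the base point one has $(\log\rho)_{uv}=-\big((af_u)_u+(af_v)_v\big)$, which is nonzero in general. Renormalizing about another point $(u_0',v_0')$ by the Proposition gives $\bar u'(u)=c/\rho(u,v_0')$, which is generally not constant. So the lemma fails for different base points: the common base point is an essential hypothesis, not merely a convenient reading.

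Incidentally, this description proves the lemma directly:
\begin{itemize}
\item On $v=v_0$, $E=-G$ and $\overline E=-\overline G$ give $\bar u'(u)^2=\bar v'(v_0)^2$.
\item On $u=u_0$, the same reasoning gives $\bar v'(v)^2=\bar u'(u_0)^2$.
\item $E=\overline E=K^{-1/2}$ at $p$ gives $\bar u'(u_0)^2=1$.
\end{itemize}
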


Assume now that $S$ is
parametrized by canonical asymptotic parameters. Then  the Gauss equation
(\ref{eq:GaussEquation-1}), can be written in the form
$$
	 \frac1{\sqrt E\sqrt{-G}}\left(\frac{a_{uv}}{1+a^2}-\frac{aa_ua_v}{(1+a^2)^2} -2af_uf_v \right)
	   + \frac{  aa_uf_u}{(1+a^2)E}	   +\frac{  aa_vf_v}{(1+a^2)G}
$$
$$
	+ \frac{(f_u)^2}{ E}+\frac{(f_v)^2}{G} 
	+ \frac1{\sqrt E}\left(\frac{f_u}{\sqrt E}\right)_u -\frac1{\sqrt{-G}}\left(\frac{f_v}{\sqrt {-G}}\right)_v 
	 +\frac{\alpha^2}{1+a^2}=0  \ .
$$

Hence we can prove the following  theorem:

\begin{thm} \label{eq:MainTheoremByAlphaAndA}

Assume the functions $a(u,v)$, $\alpha(u,v)$ are smooth in a neighbourhood 
$ D $ of a point $(u_0,v_0) \in \mathbb R^2$, $\alpha>0$ in $ D $. 
Define $f=\frac12 \left( \log\sqrt{1+a^2}-\log\alpha \right)$.
Let $(\Phi,\Psi)$ be the solution of the  PDE system
\begin{equation} \label{eq:PDEsystemInMainTheorem}
	\left\{\begin{array}{rl}
	 \Phi_v \hspace{-0.2cm}    &  \ds =f_v\Phi +af_u\Psi  \\
	 \Psi_u  \hspace{-0.3cm} &  \ds =-af_v\Phi +f_u\Psi
	\end{array}\right.
\end{equation}
with initial conditions 
$$
	\Phi(u,v_0)=e^{\ds\int_{u_0}^u\left(-af_v+f_u\right)(u,v_0)du +f(u_0,v_0)} \,,
$$
$$
	\quad  \Psi(u_0,v)=e^{\ds\int_{v_0}^v\left(f_v+af_u\right)(u_0,v)dv +f(u_0,v_0)} \,.
$$
If the equation
\begin{equation} \label{eq:GaussEquationMainTheorem}
	\begin{array}{l}
		\ds \frac1{\Phi\Psi}\left(\frac{a_{uv}}{1+a^2}-\frac{aa_ua_v}{(1+a^2)^2} -2af_uf_v \right)
				+ \frac{  aa_uf_u}{(1+a^2)\Phi^2}	   +\frac{  aa_vf_v}{(1+a^2)\Psi^2 } \\
		\ds	\qquad + \frac{(f_u)^2}{\Phi^2}+\frac{(f_v)^2}{\Psi^2} 
				+ \frac1{\Phi}\left(\frac{f_u}{\Phi}\right)_u -\frac1{\Psi}\left(\frac{f_v}{\Psi}\right)_v 
				+\frac{\alpha^2}{1+a^2}=0  
	\end{array}
\end{equation}
is satisfied, then there exists a unique (up to a motion) surface in $\mathbb R^3_1$ with basic 
asymptotic invariants the given functions $a$, $\alpha$.
Moreover $(u,v)$ are canonical asymptotic parameters.
\end{thm}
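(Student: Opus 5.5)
The plan is to reduce Theorem \ref{eq:MainTheoremByAlphaAndA} to the Bonnet-type Theorem \ref{eq:Bonnet-typeTheorem}. Put $\sqrt E:=\Phi$ and $\sqrt{-G}:=\Psi$, and define $\gamma_1,\gamma_2$ by (\ref{eq:gamma1-gamma2}):
$$
\gamma_1=-\frac{a_u}{(1+a^2)\Phi}+\frac{f_v}{\Psi},\qquad \gamma_2=\frac{a_v}{(1+a^2)\Psi}+\frac{f_u}{\Phi}.
$$
Then we check the hypotheses of Theorem \ref{eq:Bonnet-typeTheorem} for the quadruple $(\gamma_1,\gamma_2,a,\alpha)$.

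\textbf{Step 1: the system is solvable.} Since (\ref{eq:PDEsystemInMainTheorem}) is linear of Goursat type, with $\Phi$ prescribed on $v=v_0$ and $\Psi$ prescribed on $u=u_0$, it has a unique smooth solution near $(u_0,v_0)$. The initial data are exponentials, hence positive, so $\Phi,\Psi>0$ on a smaller neighbourhood.

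\textbf{Step 2: $\Phi,\Psi$ coincide with the functions of Theorem \ref{eq:Bonnet-typeTheorem}.} Solving the two formulas above for $\Phi,\Psi$ by Cramer's rule reproduces exactly the expressions $\Phi,\Psi$ of that theorem. This is the algebra that led from (\ref{eq:gamma1-gamma2}) to (\ref{eq:firstfundformByInvariants}). It requires the determinant $a_ua_v+(1+a^2)^2f_uf_v\neq0$. On the open set where this fails, Theorem \ref{eq:Bonnet-typeTheorem} is not literally applicable, and one would instead apply the integrability argument of its proof directly with the given $\Phi,\Psi$; that argument never uses the quotient formulas.

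\textbf{Step 3: the Codazzi and Gauss equations.} By the same algebra, the Codazzi-type conditions (\ref{eq:EquationsOfCodazzi-2}) are just $(\log\Phi)_v=f_v+af_u\Psi/\Phi$ and $(\log\Psi)_u=f_u-af_v\Phi/\Psi$. These are (\ref{eq:PDEsystemInMainTheorem}) divided by $\Phi$ and $\Psi$ respectively, using $\Psi/\Phi=(-a_v\gamma_1+(1+a^2)f_v\gamma_2)/(a_u\gamma_2+(1+a^2)f_u\gamma_1)$. Next, substitute the expressions for $\gamma_i$ into (\ref{eq:GaussEquation-2}) and use (\ref{eq:PDEsystemInMainTheorem}) to eliminate the mixed derivatives $\Phi_v,\Psi_u$ arising from $(\gamma_2)_u$ and $(\gamma_1)_v$. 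This should turn (\ref{eq:GaussEquation-2}) into (\ref{eq:GaussEquationMainTheorem}), which is the computation the paper already performed when rewriting (\ref{eq:GaussEquation-1}). Theorem \ref{eq:Bonnet-typeTheorem} then gives a surface, unique up to a motion, with asymptotic parameters $(u,v)$, $\sqrt E=\Phi$, $\sqrt{-G}=\Psi$, and basic invariants $\gamma_1,\gamma_2,a,\alpha$. Its $\alpha>0$ is the given $\alpha$, so its $f$ is the given $f$.

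\textbf{Step 4: canonicity and uniqueness.} Compare the initial data with Definition \ref{D:def-can}. For canonical parameters we need
$$
\sqrt E=\exp\Bigl(\int_{v_0}^v\bigl(f_v+af_u\tfrac{\Psi}{\Phi}\bigr)dv+\int_{u_0}^u\bigl(-af_v\tfrac{\Phi}{\Psi}+f_u\bigr)(u,v_0)\,du+f(u_0,v_0)\Bigr).
$$
By (\ref{eq:PDEsystemInMainTheorem}), the $v$-integral equals $\log\Phi(u,v)-\log\Phi(u,v_0)$. So canonicity reduces to the data on the initial line: $\Phi(u,v_0)$ must equal $\exp\bigl(\int(-af_v\Phi/\Psi+f_u)du+f(u_0,v_0)\bigr)$, and similarly for $\Psi$. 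The stated initial conditions, however, carry no ratio $\Phi/\Psi$. They agree with canonicity only if $\Phi=\Psi$ along the initial lines.

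This mismatch is the main obstacle. I would first test whether a solution of (\ref{eq:PDEsystemInMainTheorem}) with this Goursat data satisfies $\Phi(u,v_0)=\Psi(u,v_0)$ and $\Phi(u_0,v)=\Psi(u_0,v)$. At $(u_0,v_0)$ both equal $e^{f(u_0,v_0)}$, which is encouraging but does not settle the question along the whole lines. If this fails, I would read the initial conditions as implicitly containing the ratio, which is what canonicity forces. Either way, canonicity means $\varphi=\psi=1$, that is, $\Phi$ and $\Psi$ are pinned down by their values on the initial lines via (\ref{eq:PDEsystemInMainTheorem}). Uniqueness then follows: the metric coefficients of any surface with invariants $a,\alpha$ in canonical parameters solve the same system with the same data, hence equal $(\Phi,\Psi)$, and so determine $\gamma_1,\gamma_2$. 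The uniqueness part of Theorem \ref{eq:Bonnet-typeTheorem} finishes the argument.
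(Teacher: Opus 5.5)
Steps 1--3 are the paper's argument: define $\gamma_1,\gamma_2$ by \eqref{eq:defGammas}, check the hypotheses of Theorem \ref{eq:Bonnet-typeTheorem}, and apply it. Your caveat about the determinant $a_ua_v+(1+a^2)^2f_uf_v$ is a fair refinement that the paper ignores.

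\textbf{The gap is in Step 4.} You leave canonicity as an open ``test'', with a fallback of reading $\Phi/\Psi$ into the initial conditions, which would change the theorem. The test in fact succeeds, by linear ODE uniqueness. On $v=v_0$ the prescribed data satisfy $\Phi_u=(f_u-af_v)\Phi$. The second equation of \eqref{eq:PDEsystemInMainTheorem} gives $\Psi_u=-af_v\Phi+f_u\Psi$. Subtracting,
\[
(\Phi-\Psi)_u=f_u\,(\Phi-\Psi)\quad\text{on } v=v_0 .
\]
Moreover $\Phi-\Psi$ vanishes at $(u_0,v_0)$, since both data equal $e^{f(u_0,v_0)}$ there. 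Hence $\Phi\equiv\Psi$ on $v=v_0$. Symmetrically, $(\Phi-\Psi)_v=f_v(\Phi-\Psi)$ on $u=u_0$ gives $\Phi\equiv\Psi$ on that line. So $\Phi/\Psi=1$ on both initial lines, and the stated data are exactly the canonicity conditions you derived. The paper's closing sentence (``since $(\Phi,\Psi)$ is a solution of the Cauchy problem \dots\ $(u,v)$ are canonical'') tacitly relies on this.

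\textbf{Uniqueness needs the converse.} Your phrase ``the same data'' has to be proved. Suppose $(u,v)$ are canonical for some surface with invariants $a,\alpha$, and put $X=\sqrt E$, $Y=\sqrt{-G}$ on $v=v_0$. Canonicity gives $X_u/X=-af_vX/Y+f_u$, and \eqref{eq:System} gives $Y_u=-af_vX+f_uY$, so $(X/Y)_u=0$. Since $X=Y=e^{f(u_0,v_0)}$ at $u_0$, we get $X\equiv Y$ on that line, and likewise on $u=u_0$. Thus $(\sqrt E,\sqrt{-G})$ solves the same Goursat problem and equals $(\Phi,\Psi)$. Then \eqref{eq:gamma1-gamma2} forces $\gamma_1,\gamma_2$, and the uniqueness in Theorem \ref{eq:Bonnet-typeTheorem} applies.

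\textbf{A sign caution for Step 3.} The substitution produces the $v$-terms $\frac{aa_vf_v}{(1+a^2)G}+\frac{(f_v)^2}{G}$ with $G=-\Psi^2$, that is, $-\frac{aa_vf_v}{(1+a^2)\Psi^2}-\frac{(f_v)^2}{\Psi^2}$. The displayed \eqref{eq:GaussEquationMainTheorem} prints plus signs there. The minus-sign version is the correct hypothesis. The Enneper-type surface of Example \ref{ex:1} has $a=0$ and $\Phi=\Psi=(1-u^2+v^2)/2$; it satisfies the minus-sign version, while the printed left-hand side equals $32v^2/(1-u^2+v^2)^4$. So your claim that the computation ``should turn into'' \eqref{eq:GaussEquationMainTheorem} holds only after this sign correction.
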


\begin{proof}
We introduce the functions
\begin{equation} \label{eq:defGammas}
	\begin{array}{l}
		\ds\gamma_1=-\frac{a_u}{(1+a^2)\Phi}+\frac{f_v}{\Psi}  \ ,  \qquad\qquad
		\ds\gamma_2=\frac{a_v}{(1+a^2)\Psi}+\frac{f_u}{\Phi}  \ . 
	\end{array}
\end{equation}
Using (\ref{eq:PDEsystemInMainTheorem}), (\ref{eq:GaussEquationMainTheorem}) and (\ref{eq:defGammas}) 
we can check that the conditions of Theorem \ref{eq:Bonnet-typeTheorem} are fulfilled. 
Hence there exists
a unique (up to a motion) surface in $\mathbb R^3_1$ with basic 
asymptotic invariants  $\gamma_1$, $\gamma_2$, $a$, $\alpha$ and $(u,v)$ are asymptotic parameters.

Finally, since the pair $(\Phi,\Psi)$ is a solution
of the Cauchy problem in the condition of the theorem, 
then $(u,v)$ are canonical asymptotic parameters. 

\end{proof}

To obtain the above theorem in a form with the classical invariants $K$ and $H$, we first remember that
the Gauss curvature $K$ and the mean curvature $H$ of a surface, parametrized with asymptotic
parameters satisfy:
$$
	K=\frac{\alpha^2}{1+a^2} \ ,  \hspace{1.5cm}
	H=\frac{a\alpha}{1+a^2} \ . 
$$
Conversely, 
$$
	a=\frac{H}{\sqrt{K-H^2}}  \qquad\qquad\  \alpha =\frac{K}{\sqrt{K-H^2}}
$$
or
$$
	\ \ a=-\frac{H}{\sqrt{K-H^2}}  \qquad\qquad \alpha =-\frac{K}{\sqrt{K-H^2}} \ .
$$
Consequently,  from Theorem \ref{eq:MainTheoremByAlphaAndA} we obtain:

\begin{thm} \label{eq:MainTheoremByKandH}
Assume the functions $K(u,v)$, $H(u,v)$ are smooth in a neighbourhood 
$ D $ of a point $(u_0,v_0) \in \mathbb R^2$, $K>0$, $K-H^2>0$ in $U$. 
Define  $a=\frac{H}{\sqrt{K-H^2}}$.
Let $(\Phi,\Psi)$ be the solution of the  PDE system
$$
	\left\{\begin{array}{rl}
	 \Phi_v \hspace{-0.2cm}    &  \ds =-\frac1{4K}\left( K_v\Phi +aK_u\Psi \right)  \\
	 \Psi_u  \hspace{-0.3cm} &  \ds =\frac{1}{4K}\left( aK_v\Phi -K_u\Psi \right)
	\end{array}\right.
$$
with initial conditions 
$$
	\Phi(u,v_0)=e^{\ds\int_{u_0}^u\left(a(\log\root4\of K)_v-(\log\root4\of K)_u\right)(u,v_0)du -\log\root4\of K(u_0,v_0)} \,,
$$
$$
	\quad  \Psi(u_0,v)=e^{\ds\int_{v_0}^v\left(-(\log\root4\of K)_v-a(\log\root4\of K)_u\right)(u_0,v)dv -\log\root4\of K(u_0,v_0)} \,.
$$
If the equation
$$
	 \frac1{\Phi\Psi}\left(\frac{a_{uv}}{1+a^2}-\frac{aa_ua_v}{(1+a^2)^2} -\frac{aK_uK_v}{8K^2} \right)
	   - \frac{  aa_uK_u}{4(1+a^2)K\Phi^2}	   -\frac{  aa_vK_v}{4(1+a^2)K\Psi^2}
$$
$$
	+ \frac{(K_u)^2}{16\Phi^2K^2}+\frac{(K_v)^2}{16\Psi^2K^2} 
	- \frac1{4\Phi}\left(\frac{K_u}{\Phi K}\right)_u +\frac1{4\Psi}\left(\frac{K_v}{\Psi K}\right)_v 
	 +K=0  
$$
is satisfied, then there exists a unique (up to a motion) surface in $\mathbb R^3_1$ with 
Gauss curvature $K$ and mean curvature $H$.
Moreover $(u,v)$ are canonical asymptotic parameters.

\end{thm}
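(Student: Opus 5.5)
The plan is to reduce the statement to Theorem \ref{eq:MainTheoremByAlphaAndA} by the change of invariant functions $(K,H)\mapsto(a,\alpha)$. Given $K>0$ and $K-H^2>0$, I set
$$
	a=\frac{H}{\sqrt{K-H^2}}\,,\qquad \alpha=\frac{K}{\sqrt{K-H^2}}\,.
$$
These are smooth, and $\alpha>0$ in $D$, as Theorem \ref{eq:MainTheoremByAlphaAndA} requires. A direct computation gives $1+a^2=K/(K-H^2)$. Hence
$$
	\frac{\alpha^2}{1+a^2}=K\,,\qquad \frac{a\alpha}{1+a^2}=H\,,
$$
so these $a,\alpha$ reproduce the prescribed $K$ and $H$ via (\ref{eq:KandHbyaAndAlpha}). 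Moreover, by Remark \ref{remark:K-aAndalppha},
$$
	f=\tfrac12\bigl(\log\sqrt{1+a^2}-\log\alpha\bigr)=-\log\root4\of K\,,
$$
so that $f_u=-K_u/(4K)$ and $f_v=-K_v/(4K)$.

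Next I substitute these expressions for $f$, $f_u$, $f_v$ into each hypothesis of Theorem \ref{eq:MainTheoremByAlphaAndA} and check that it becomes the corresponding hypothesis of the present statement.
\begin{itemize}
\item The system (\ref{eq:PDEsystemInMainTheorem}) becomes $\Phi_v=-\frac1{4K}(K_v\Phi+aK_u\Psi)$ and $\Psi_u=\frac1{4K}(aK_v\Phi-K_u\Psi)$, which is exactly the system in the statement.
\item In the initial conditions, $-af_v+f_u$ becomes $a(\log\root4\of K)_v-(\log\root4\of K)_u$, $f_v+af_u$ becomes $-(\log\root4\of K)_v-a(\log\root4\of K)_u$, and $f(u_0,v_0)=-\log\root4\of K(u_0,v_0)$. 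These match the stated data.
\item In (\ref{eq:GaussEquationMainTheorem}), term by term:
\begin{itemize}
\item $2af_uf_v=aK_uK_v/(8K^2)$;
\item $aa_uf_u/((1+a^2)\Phi^2)=-aa_uK_u/(4(1+a^2)K\Phi^2)$, and similarly for the $v$-term;
\item $(f_u)^2/\Phi^2=K_u^2/(16K^2\Phi^2)$;
\item $\frac1\Phi(f_u/\Phi)_u=-\frac1{4\Phi}(K_u/(\Phi K))_u$, and analogously with $v$ and $\Psi$;
\item $\alpha^2/(1+a^2)=K$.
\end{itemize}
This yields exactly the equation in the statement.
\end{itemize}

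Theorem \ref{eq:MainTheoremByAlphaAndA} then gives a surface, unique up to a motion, with basic asymptotic invariants $a,\alpha$ and with $(u,v)$ canonical asymptotic parameters. By (\ref{eq:KandHbyaAndAlpha}) its Gauss and mean curvatures are $K$ and $H$, which proves existence.

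The main obstacle is uniqueness. A surface with curvatures $K,H$ in canonical asymptotic parameters $(u,v)$ has invariants $(a,\alpha)$ equal to one of the two sign choices $\pm(H,K)/\sqrt{K-H^2}$, so it need not a priori be the surface just constructed. I would handle this as follows.
\begin{itemize}
\item $K>0$ forces $\alpha\neq0$. Replacing $n$ by $-n$ (a change of orientation, realized by a motion composed with the interchange of the roles of the parameters if needed) flips the signs of $\alpha$ and $a$ simultaneously. So I may normalize $\alpha>0$, which selects the first branch.
\item With this branch fixed, the first fundamental form coefficients $\sqrt E$, $\sqrt{-G}$ satisfy the system (\ref{eq:System}) together with the canonical normalization of Definition \ref{D:def-can}. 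Hence $(\sqrt E,\sqrt{-G})$ solves the same Cauchy problem as $(\Phi,\Psi)$, and they coincide.
\item Then $\gamma_1,\gamma_2$ are determined by (\ref{eq:gamma1-gamma2}), so the uniqueness part of Theorem \ref{eq:Bonnet-typeTheorem} applies.
\end{itemize}
The sign bookkeeping in the first step is the point I expect to need the most care.
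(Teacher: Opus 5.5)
Your reduction is exactly how the paper obtains this theorem, and your computations are correct: you set $a=H/\sqrt{K-H^2}$, $\alpha=K/\sqrt{K-H^2}$, note $f=-\tfrac14\log K$, and check the system, the initial data and the Gauss equation term by term against Theorem~\ref{eq:MainTheoremByAlphaAndA}. In the paper, the uniqueness clause is simply inherited from Theorem~\ref{eq:MainTheoremByAlphaAndA}. It therefore refers to surfaces whose basic asymptotic invariants are these $a,\alpha$, i.e.\ to the branch $\alpha>0$, which the statement fixes by defining $a=H/\sqrt{K-H^2}$. For that class your second and third bullets are fine. One line should be added: the canonical normalization matches the stated initial data because it forces $\sqrt{E}=\sqrt{-G}$ along $u=u_0$ and along $v=v_0$.

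The step that fails is your attempt to extend uniqueness to surfaces with $\alpha<0$ by a sign normalization.
\begin{itemize}
\item With the paper's conventions ($E>0$, $G<0$, and $n$ chosen so that $\{z_u,z_v,n\}$ is positively oriented), the normal is determined by the parametrization, and $a=\langle x,y\rangle$ does not involve $n$.
\item An orientation-reversing motion turns $n$ into minus its image, so it sends $(a,\alpha,H)$ to $(a,-\alpha,-H)$. It changes $H$ and leaves $a$ alone.
\item Interchanging $u$ and $v$ is not available, since it produces $E<0<G$.
\item Reversing one parameter, $u\mapsto -u$, gives new invariants $-a(-u,v)$, $\alpha(-u,v)$, $-H(-u,v)$.
\end{itemize}
So flipping $a$ and $\alpha$ together requires both operations. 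The resulting surface then has curvatures $K(-u,v)$, $H(-u,v)$ rather than the prescribed functions. You cannot normalize to $\alpha>0$ while keeping $K$, $H$ and $(u,v)$ fixed.

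A competitor with $\alpha<0$ would have $a=-H/\sqrt{K-H^2}$. Its $(\sqrt{E},\sqrt{-G})$ would solve the Cauchy problem with $-a$ in place of $a$, and it would satisfy the Gauss equation with $-a$ in place of $a$. Excluding it needs an argument that this equation is incompatible with the stated one, not a change of orientation. For $K\equiv 1$ and $a=\sinh\omega$ such an argument exists: the two equations read $\omega_{uv}+\cosh\omega=0$ and $\omega_{uv}-\cosh\omega=0$, which cannot both hold. Either supply such an argument in general, or state uniqueness in the sense in which it follows from Theorem~\ref{eq:MainTheoremByAlphaAndA}, namely within the class $\alpha>0$.
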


In particular, if $S$ is minimal, our notations imply $a=0$, $K=\alpha^2$,
$f=-\log\sqrt\alpha$.
Definition \ref{D:def-can} of canonical asymptotic parameters is now equivalent to

$$
		 E=\frac{1}{\alpha} \ ,  \qquad\qquad  F=0 \ ,  \qquad\qquad
		 G=-\frac{1}{\alpha}  \ ,
$$
$$
		 L=0 \ ,  \qquad\qquad  M^2=1 \ ,  \qquad\qquad
		 N=0  \ .
$$
Consequently in this case our definition 
of canonical asymptotic parameters coincides with the definition of canonical parameters, given in \cite{Ganchev-Krassi-2019}.

Let now $S$ be a surface of constant positive Gauss curvature and imaginary principal curvatures. 
Up to similarity we suppose that the Gauss curvature $K=1$. 
Then Theorem \ref{eq:MainTheoremByKandH} implies
$$
	\Phi_v=0 \hspace{2cm}  \Psi_u=0 \hspace{2cm}   \Phi(u,v_0)=1   \hspace{2cm}   \Psi(u_0,v)=1 \ ,
$$
so $\Phi=\Psi=1$ and the Gauss equation takes the form
\begin{equation} \label{eq:SurfaceConstantCurvatureEquation}
	 \frac{a_{uv}}{1+a^2}-\frac{aa_ua_v}{(1+a^2)^2}  +1=0    
\end{equation}
where
$$
	a=\frac{H}{\sqrt{1-H^2}} \ .
$$	
If we put $a=\sinh \omega$ in (\ref{eq:SurfaceConstantCurvatureEquation}) we obtain the equation of the surface of Gauss curvature 1
in the form of  a cosh-Gordon equation as in \cite{G-H-I}:
$$
	\omega_{uv}+\cosh\omega = 0 \ .
$$
In  accordance with the general notion of Chebyshef coordinates (see e.g. \cite{Gray-A-S}),  
$(u,v)$ are called in  \cite{G-H-I} asymptotic Chebyshev coordinates.

\vspace{0.5cm}


\setcounter{equation}{0}

\section{Examples}

\begin{example} \label{ex:2}  Consider the time-like surface in $\mathbb R^3_1$  with the parametrization
$$
	z(u,v)=\Big( \frac{v^3}6+\frac{u^2v}2-\frac v2,\frac{u^2}2+\frac{v^2}2,\frac{u^3}6+\frac{uv^2}2+\frac{v^3}6+\frac u2 \Big) \ .
$$
The coefficients of the first and the second fundamental forms are respectively
$$
	E= - \frac14(1-u^2+v^2)^2 \qquad F=0  \qquad G= \frac14(1-u^2+v^2)^2  \ ,
$$
$$
	L=-1  \qquad M=0  \qquad  N=1 \ .
$$
For the Gauss curvature and the mean curvature we find 
$$
	K=-\frac{16}{(1-u^2+v^2)^4}<0 \qquad  H=0   \ .
$$
Hence the surface has (real) principal parameters and imaginary asymptotic parameters. 
As explained in Introduction, for such surfaces the canonical (principal) parameters are defined 
as in \cite{G-V-2013} following the procedure for surfaces in $\mathbb R^3$.

This surface is an Enneper-type time-like surface of negative Gauss curvature.

\end{example}

\begin{example} \label{ex:1}  For the time-like surface in $\mathbb R^3_1$  with the parametrization
$$
	z(u,v)=\Big( \frac{u^3}6+\frac{uv^2}2-\frac u2,uv,\frac{u^2v}2+\frac{v^3}6+\frac v2 \Big)   
$$
the coefficients of the first and the second fundamental forms are respectively
$$
	E= \frac14(1-u^2+v^2)^2 \qquad F=0  \qquad G= - \frac14(1-u^2+v^2)^2 \ ,
$$
$$
	L=0  \qquad M=1  \qquad  N=0 \ .
$$
Then for the Gauss curvature and the mean curvature we obtain
$$
	K=\frac{16}{(1-u^2+v^2)^4}>0 \qquad  H=0   \qquad K-H^2=\frac{16}{(1-u^2+v^2)^4}>0  \ .
$$
Hence the surface admits real asymptotic lines. Moreover, it can be checked that  
$(u,v)$ are canonical asymptotic parameters. 

This surface is an Enneper-type time-like surface of positive Gauss curvature.

\end{example}

\begin{example} \label{ex:3}   Consider the rotational surface given by
$$
	z(u,v)=\Big( u,\cos u\cosh v,\cos  u\sinh v \Big) 
$$
for $|u|<\pi/2$. Then the coefficients of the first and the second fundamental forms are respectively
$$
	E=1+\sin^2u  \qquad F=0  \qquad G=-\cos^2u \ ,
$$
$$
	L=\frac{\cos u}{\sqrt{1+\sin^2u}}  \qquad M=0  \qquad  N=-\frac{\cos u}{\sqrt{1+\sin^2u}}  \ .
$$
Hence for the Gauss and the mean curvature we obtain
$$
	K=\frac{1}{(1+\sin^2 u)^2}   \qquad H=\frac1{\cos u\big(\sqrt{1+\sin^2u}\big)^3}  \qquad K-H^2=-\frac{\sin^2u\tan^2u}{(1+\sin^2u)^3} \ .
$$
Here $K-H^2<0$, but $K>0$, so the surface admits both principal and asymptotic parameters. For example after the
change 
$$
	u=\sinh\bar u+\bar v  \qquad  v=\sinh\bar u-\bar v
$$
we see that the parameters $(\bar u, \bar v)$ are asymptotic. But  in these parameters 
$\overline E>0$, $\overline G>0$, so we can not use our constructions.
\end{example}

\begin{example} \label{ex:4}  Consider  the Lorentz sphere with parametrization
$$
	z(u,v)=\left( \frac{\cosh v}{\cosh u},\tanh u,\frac{\sinh v}{\cosh u} \right)  \ .
$$
Then 
$$
	E=\frac1{\cosh^2u }   \qquad   F=0   \qquad   G=-\frac1{\cosh^2u }  \ ,
$$
$$
	L=-\frac1{\cosh^2u}    \qquad   M=0   \qquad  N=\frac1{\cosh^2u}  \ .
$$
Consequently, $K=1$, $K-H^2=1$. Trying to obtain asymptotic parameters we put
$$
	u=\bar u-\bar v    \qquad    v=\bar u +\bar v \ .
$$ 
Now we find that
the resulting parameters are isotropic, i.e. $\bar E=\bar G=0$. So our method is not applicable.

\end{example}

The last two examples show that 
the requirement $K-H^2>0$ is essential for our method.

{\bf Funding.} The author is partially supported by the National Science Fund, Ministry of Education and Science of Bulgaria, 
under contract KP-06-N82/6.

\end{document}